\newtheorem{thm}{Theorem}[section]
\newtheorem{prop}[thm]{Proposition}
\theoremstyle{remark}
\def\d{\partial}
\title{{\bf Probabilistic Non-asymptotic Analysis of Distributed Algorithms}}
\author{Nicolas Champagnat\footnote{Inria and Institut Elie Cartan, Universit\'e de Lorraine, Nancy, France} , Ren\'e Schott\footnote{Institut Elie Cartan and LORIA, Universit\'e de Lorraine, Nancy, France} and Denis Villemonais\footnote{Institut Elie Cartan, Universit\'e de Lorraine, Nancy, France}}
\date{}
\begin{document}
\maketitle
{\bf Keywords:} Distributed algorithms, Deadlock, Quasi-stationary distributions.

\begin{abstract}
We present a new probabilistic analysis of distributed algorithms. Our approach relies on the theory of quasi-stationary
distributions (QSD) recently developped by the first and third authors \cite{CV, CV2, CV3}.
We give properties on the deadlock time and the distribution of the model before deadlock, both for discrete and diffusion models.
Our results are non-asymptotic since they apply to any finite values of the involved parameters (time, numbers of resources, number
of processors, etc.) and reflect the real behavior of these algorithms, with potential applications to deadlock prevention, which are
very important for real world applications in computer science.
\end{abstract}

 \section{Introduction}
Today's distributed systems involve a huge (but finite) number of processors sharing common resources (i.e. are massively parallel).
These systems are inherently fragile. For example, if a processor is running out of memory it can stop the whole system and deadlock
may appear. Usually, analysis of distributed systems leads to asymptotic results (where the parameters of interest tend to infinity)
and characterization of limit laws (of large numbers, central limit theorems, etc.). But infinity does not exist in computer science!
Data structures have large but finite dimension, even the most efficient computer is not able to realize an infinite number of operations. Asymptotic results are, therefore, not very useful. Non-asymptotic results are true for any value of the parameters of interest (time, memory area, etc.) and are, therefore, closer to real world applications in computer science.
The purpose of this paper is to present a non-asymptotic analysis of distributed algorithms which works for any finite number of processors and any finite number of resources (a similar study is possible for other types of algorithms).\\ 
Our approach relies on the theory of quasi-stationary distributions (QSD) recently developed by Champagnat, Villemonais et al. \cite{CCV, CV, CV2, CV3, MV}.

The organization of this paper is as follows: simple distributed algorithms are presented in Section~\ref{sec:ex}.
Section~\ref{sec:general-QSD} contains generalities on quasi-stationary distributions in finite state spaces. Our main results are
stated and proved in Section~\ref{sec:SDE}. Simulations are presented  in Section~\ref{sec:simulations}. Section~\ref{sec:ccl} contains
concluding remarks and some further research aspects.


\section{Examples}
\label{sec:ex}

We describe two models of distributed systems with possible deadlocks, which will be studied numerically in
Section~\ref{sec:simulations}.

\subsection{Colliding stacks}
\label{sec:colliding-stacks}
The presentation of this example is based on \cite{Fla}. For pedagogical reasons, we consider only two stacks but, of course, real storage allocation algorithms involve a huge number of stacks.\\
Assume that two stacks are to be maintained inside a shared (contiguous) memory area of a fixed size $m$. A trivial algorithm will let them grow from both ends of that memory area until their cumulative sizes fill the initially allocated storage ($m$ cells), and the algorithm stops having exhausted the available memory. That shared storage allocation algorithm is to be compared to another option, namely allocating separate zones of size $m/2$ to each of the two stacks. This separate storage allocation method will then halt as soon as any one of the two stacks reaches sizes $m/2$. Several measures may be introduced to compare these two schemes. One of them is the number of operations that can be treated by the algorithms under some appropriate probabilistic model. Another interesting measure of the efficiency of the shared allocation that was proposed by Knuth \cite{Knu}, is the size of the largest stack when both stacks meet and the algorithm runs out of storage. Flajolet \cite{Fla} completely analyzed (combinatorially) this problem and thus solved a question posed by Knuth \cite{Knu} (Vol. 1, Exercice 2.2.2.13). Partial results have been obtained earlier by Yao \cite{Yao}, but it appears that covering all cases of the original problem cannot be achieved by an extension of Yao's methods.
As has been noticed since the problem was initially posed by Knuth \cite{Knu}, the natural formulation is in terms of random walks. Here the random walk takes place in a triangle in a $2$-dimensional lattice space: a state is the couple formed with the size of both stacks. The random walk has two reflecting barriers along the axes (a deletion takes no effect on an empty stack) and one absorbing barrier parallel to the second diagonal (the algorithm stops when the combined sizes of the stacks exhaust the available storage).\\
Probabilistic analyses of the colliding stacks problem have been done (under various assumptions) by Louchard and Schott \cite{LS}, Louchard, Schott, Tolley and Zimmermann \cite{LSTZ}, Maier \cite{Mai}, Guillotin-Plantard and Schott \cite{GS}, Comets, Delarue and Schott \cite{CDS,CDS2}.

\subsection{Banker algorithm}
\label{banker}
For simplicity, we restrict the presentation (as for the colliding stacks) to dimension $2$.\\
Consider two customers $P_1$ and $P_2$ sharing a fixed quantity of a given resource $R$ (money, say). There are fixed upper bounds
$m_1$, $m_2$ on how much of the resource each of the customers will need at any time. The banker decides to affect to the customer
$P_i$ ($i=1,2$) the required units only if the remaining units are sufficient in order to fulfill the requirements of $P_j$
($j=1,2;j\neq i$). The situation is modeled by a random walk in a rectangle with a broken corner (i.e. $(x_1, x_2): 0\leq x_1 \leq
m_1, 0\leq x_2 \leq m_2, x_1 + x_2 \leq m $, where the last constraint generates the broken corner). The random walk is reflected on
the sides parallel to the axes and is absorbed on the sloping side.\\
Probabilistic analyses of this algorithm have been presented in \cite{LS, LSTZ, GS} for two customers and in \cite{CDS, CDS2} for $d\in N$ customers.\\
Maier and Schott \cite{MS} proved partial results for $d$ customers and $r\in N$ resources.\\

\subsection{Description of the Model in higher dimension}
\label{sec:polytope}
We consider the interaction of $q$ processes $P_1$, $P_2$, \ldots, $P_q$, each with its own resource needs. We allow the processes to
access to $r$ different, non-substituable resources (i.e. types of memory) $R_1$, $R_2$, \ldots, $R_r$. We model resource
limitations, and define resource exhaustion as follows. At any time $s$, process $P_i$ is assumed to have allocated some quantity
$y_{i}^{j}(s)$ of resource $R_j$, which may take discrete values (as for the random walks of the previous examples) or continuous
values (as for the diffusion processes considered in~\cite{CDS}). Process $P_i$ is assumed to have some maximum need $m_{ij}$ of
resource $R_j$ so that
\begin{equation}
  \label{eq:1}
  0\leq y_{i}^{j}\leq m_{ij} ,\quad\forall s\geq 0.
\end{equation}
The constant $m_{ij}$ may be infinite; if finite, it is a hard limit which the process $P_i$ never attempts to exceed. The resources
$R_j$ are limited:
\begin{equation}
\label{eq:2}
\sum_{i=1}^{q} y_{i}^{j}(s) < m_j,
\end{equation}
so that $m_j -1$ is the total amount of resource $R_j$ available for allocation. Remember that resource exhaustion occurs when some process $P_i$ issues an unfulfillable request for a quantity of some resource $R_j$. Here "unfulfillable" means that fulfilling the request would violate one of the inequalities~\eqref{eq:2}.\\

The state space of the memory allocation system is a convex polytope with faces defined by hyperplanes $H_1,\ldots,H_k$ which are either reflective or absorbing.
\begin{itemize}  
\item For example, we can consider a model with $d$ processors and a single resource with limit $m$, so that the state space of the
  random walk is
  $$
  E=\left\{ x\in [0,a_1]\times\ldots\times[0,a_d] \text{ s.t.\ }\langle x,v\rangle<m\right\},
  $$
  where $v$ is a vector of $\mathbb{R}_+^d\setminus\{0\}$ with nonnegative coordinates and with absorbing state
  $$
  \partial=\left\{x\in [0,a_1]\times\ldots\times[0,a_d] \text{ s.t.\ }\langle x,v\rangle=m\right\}.
  $$
\item General case. For $d$ processors and $r$ resources, 
  \begin{align}
  \label{eq:domain}
  E=\left\{ x\in [0,a_1]\times\ldots\times[0,a_d] \text{ s.t.\ }\langle x,v_j\rangle<m_j,\ \forall j=1,\ldots,r\right\},
  \end{align}
  where $v_j$ are vectors of $(\mathbb{R}_+)^d\setminus\{0\}$ with nonnegative coordinates and $m_j$ is the maximum amount of
  resource $j$, and with absorbing state
  \begin{equation}
    \label{eq:boundary-domain}
    \partial=\left\{x\in [0,a_1]\times\ldots\times[0,a_d] \text{ s.t.\ }\langle x,v_j\rangle=m_j,\ \forall j=1,\ldots,r \right\}.
  \end{equation}
\end{itemize}

\section{Quasi-stationary distributions}
\label{sec:general-QSD}

The goal of this section is to give a short survey on the main results on quasi-stationary distributions for absorbed Markov
processes and their implications on deadlock prevention and analysis. To keep things simple, we focus here on
the case of continuous-time processes taking values in a finite state space, like in Section~\ref{banker}.

We consider a Markov process $(X_t,t\geq 0)$ taking values in a finite state space $E\cup\partial$, where $\partial$ is absorbing, meaning
that $X_t\in\partial$ a.s.\ for all $t\geq\tau_\partial:=\inf\{t\geq 0: X_t\in\partial\}$. We assume that $\tau_\partial<\infty$
a.s., i.e.\ $\partial$ is accessible from any state in $E$. In the context of the colliding stacks and the banker models, $\tau_\partial$ is the deadlock time.
For all $x\neq y\in E\cup\partial$, we denote by $q_{x,y}\geq 0$ the transition rate from $x$ to $y$ and we set as usual
$$
q_{x,x}:=-q_x:=\sum_{y\neq x} q_{x,y}.
$$
We assume in all this section that the matrix $Q:=(q_{x,y})_{x,y\in E}$ is irreducible, so that Perron-Frobenius theorem applies to the
exponential of the matrix $Q$: when $t\rightarrow+\infty$,
\begin{equation}
  \label{eq:PF}
  (e^{tQ})_{x,y}=e^{-\lambda_0 t}v_xu_y+O(e^{-\lambda_1 t}),  
\end{equation}
where $-\lambda_0$ is the spectral radius of the matrix $Q$, $u$ and $v$ are the normalized, positive left and right eigenvector of
$Q$ for the eigenvalue $-\lambda_0$, i.e.\ $uQ=-\lambda_0 u$, $Qv=-\lambda_0 v$, $\sum_x u_x=1$ and $\sum_x u_xv_x=1$, and
$$
-\lambda_1:=\sup_{\lambda\in\text{Sp}(Q),\ \lambda\neq\lambda_0}\text{Re}(\lambda),
$$
where $\text{Sp}(Q)$ is the spectrum of the matrix $Q$ in $\mathbb{C}$ and $\text{Re}(z)$ is the real part of $z\in\mathbb{C}$. Note
that, since the matrix $Q$ is irreducible and sub-conservative in the sense that $Q\mathbf{1}\leq 0$ with at least one negative
coordinate, where $\mathbf{1}$ is the vector of $\mathbb{R}^E$ with all coordinates equal to $1$, Perron-Frobenius theory entails
that $-\lambda_1<-\lambda_0<0$.

The next result was first proved in~\cite{DS} and follows easily from the formulas
$$
\mathbb{P}_x(t<\tau_\partial)=\mathbb{P}_x(X_t\in E)=\sum_{y\in E}(e^{tQ})_{x,y}
$$ 
and
$$
\mathbb{P}_x(X_t=y\mid t<\tau_\partial)=\frac{\mathbb{P}_x(X_t=y)}{\mathbb{P}(X_t\in E)}=\frac{(e^{tQ})_{x,y}}{\sum_{z\in
    E}(e^{tQ})_{x,z}}.
$$

\begin{prop}
  \label{prop:PF}
  There exists a constant $C$ such that, for all $x,y\in E$ and all $t\geq 0$,
  \begin{equation}
    \label{eq:expo-cvgce}
    |\mathbb{P}_x(X_t=y\mid t<\tau_\partial)-u_y|\leq Ce^{-(\lambda_1-\lambda_0)t}    
  \end{equation}
  and the probability measure $u=(u_x,x\in E)$ on $E$ is a quasi-stationary distribution, in the sense that
  \begin{equation}
    \label{eq:def-QSD}
    \mathbb{P}_u(X_t=y\mid t<\tau_\partial)=u_y,\quad\forall y\in E,\ t\geq 0,    
  \end{equation}
  where $\mathbb{P}_u=\sum_{x\in E} u_x\mathbb{P}_x$. In addition, for all $x\in E$ and $t\geq 0$,
  \begin{equation}
    \label{eq:eta}
    |e^{\lambda_0 t}\mathbb{P}_x(t<\tau_\partial)-v_x|\leq Ce^{-(\lambda_1-\lambda_0)t}.
  \end{equation}
\end{prop}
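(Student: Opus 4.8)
The plan is to substitute the Perron--Frobenius expansion~\eqref{eq:PF} into the two displayed formulas preceding the statement and then control the resulting error terms uniformly in $t\geq 0$. The argument is essentially algebraic once~\eqref{eq:PF} is in hand; the only delicate point is the passage from an asymptotic estimate (valid as $t\to\infty$) to a bound valid for every $t\geq 0$.

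First I would establish~\eqref{eq:eta}. Summing~\eqref{eq:PF} over $y\in E$ and using $\sum_{y\in E}u_y=1$ gives
$$
\mathbb{P}_x(t<\tau_\partial)=\sum_{y\in E}(e^{tQ})_{x,y}=e^{-\lambda_0 t}v_x+O(e^{-\lambda_1 t}),
$$
hence $e^{\lambda_0 t}\mathbb{P}_x(t<\tau_\partial)=v_x+O(e^{-(\lambda_1-\lambda_0)t})$, which is~\eqref{eq:eta} for $t$ large; since $v_x>0$, this also shows that there is $T>0$ such that $e^{\lambda_0 t}\mathbb{P}_x(t<\tau_\partial)\geq v_x/2>0$ for all $t\geq T$ and all $x\in E$. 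For $t\in[0,T]$ the left-hand side of~\eqref{eq:eta} is bounded (by $1+\max_x v_x$) while $e^{-(\lambda_1-\lambda_0)t}\geq e^{-(\lambda_1-\lambda_0)T}>0$, so enlarging the constant $C$ covers the compact regime and yields~\eqref{eq:eta} for all $t\geq 0$.

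Next, for~\eqref{eq:expo-cvgce}, I would write
$$
\mathbb{P}_x(X_t=y\mid t<\tau_\partial)-u_y=\frac{(e^{tQ})_{x,y}}{\sum_{z\in E}(e^{tQ})_{x,z}}-u_y=\frac{e^{\lambda_0 t}(e^{tQ})_{x,y}-u_y\,e^{\lambda_0 t}\sum_{z}(e^{tQ})_{x,z}}{e^{\lambda_0 t}\sum_{z}(e^{tQ})_{x,z}}.
$$
By~\eqref{eq:PF} the numerator equals $\bigl(v_xu_y+O(e^{-(\lambda_1-\lambda_0)t})\bigr)-u_y\bigl(v_x+O(e^{-(\lambda_1-\lambda_0)t})\bigr)=O(e^{-(\lambda_1-\lambda_0)t})$, and by the lower bound from the previous step the denominator is $\geq v_x/2$ for $t\geq T$; thus the ratio is $O(e^{-(\lambda_1-\lambda_0)t})$ for $t\geq T$. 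On $[0,T]$ the left-hand side is at most $1$, so as before one enlarges $C$ to obtain~\eqref{eq:expo-cvgce} for all $t\geq 0$.

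Finally, the quasi-stationarity~\eqref{eq:def-QSD} follows directly from the left-eigenvector relation $uQ=-\lambda_0 u$, which gives $u\,e^{tQ}=e^{-\lambda_0 t}u$. Indeed, for $y\in E$,
$$
\mathbb{P}_u(X_t=y)=\sum_{x\in E}u_x(e^{tQ})_{x,y}=e^{-\lambda_0 t}u_y,\qquad \mathbb{P}_u(t<\tau_\partial)=\sum_{y\in E}\mathbb{P}_u(X_t=y)=e^{-\lambda_0 t},
$$
so dividing one by the other yields $\mathbb{P}_u(X_t=y\mid t<\tau_\partial)=u_y$. I expect the only real obstacle to be the bookkeeping needed to turn the $O(\cdot)$ statements into a single constant $C$ valid for all $t\geq 0$, including the small-$t$ range where~\eqref{eq:PF} carries no information; this is harmless because all quantities are bounded and $e^{-(\lambda_1-\lambda_0)t}$ stays bounded away from $0$ on compacts, and because irreducibility of $Q$ guarantees $\mathbb{P}_x(t<\tau_\partial)>0$ for every finite $t$ so that the conditional probabilities are well defined throughout.
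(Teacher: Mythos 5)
Your proof is correct and follows exactly the route the paper intends: the paper gives no detailed argument for this proposition, merely noting that it ``follows easily'' from the two displayed formulas together with the Perron--Frobenius expansion~\eqref{eq:PF}, and your write-up supplies precisely those details (the substitution, the uniform lower bound on the denominator from $\min_x v_x>0$, the left-eigenvector computation for~\eqref{eq:def-QSD}, and the passage from an asymptotic estimate to a bound valid for all $t\geq 0$ by enlarging $C$ on compacts). The only cosmetic slip is that on $[0,T]$ the quantity $e^{\lambda_0 t}\mathbb{P}_x(t<\tau_\partial)$ is bounded by $e^{\lambda_0 T}$ rather than by $1$, which changes nothing in the argument.
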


Quasi-stationary distributions like $u$ satisfy general properties as explained for example in~\cite{MV}, summarized in the next
proposition.

\begin{prop}
  \label{prop:pty-QSD}
  When the initial population is distributed according to the quasi-stationary distribution $u$, the absorption time $\tau_\partial$
  is exponentially distributed with parameter $\lambda_0$, i.e.
  $$
  \mathbb{P}_u(t<\tau_\partial)=e^{-\lambda_0 t},\quad\forall t\geq 0,
  $$
  $\tau_\partial$ is independent of $(X_{\tau_\partial-},X_{\tau_\partial})$, where $X_{\tau_\partial-}$ is the position just before
  exit and $X_{\tau_\partial}$ is the exit position. In addition, the joint law of $(X_{\tau_\partial-},X_{\tau_\partial})$ under
  $\mathbb{P}_u$ is given by
  $$
  \mathbb{P}_u(X_{\tau_\partial-}=x,\,X_{\tau_\partial}=y)=\frac{u_xq_{x,y}}{\lambda_0},\quad\forall x\in E, y\in\partial.
  $$
\end{prop}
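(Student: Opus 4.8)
The plan is to deduce all three assertions from Proposition~\ref{prop:PF}, using essentially only the defining identity~\eqref{eq:def-QSD} and the elementary spectral relation $u\,e^{tQ}=e^{-\lambda_0 t}u$ (immediate from $uQ=-\lambda_0 u$). I would first record the exponential law: since $\partial$ is absorbing, $\{X_t\in E\}=\{t<\tau_\partial\}$, so
$$
\mathbb{P}_u(t<\tau_\partial)=\sum_{x,y\in E}u_x\,(e^{tQ})_{x,y}=e^{-\lambda_0 t}\sum_{y\in E}u_y=e^{-\lambda_0 t},
$$
using $\sum_y u_y=1$. Combined with~\eqref{eq:def-QSD}, this gives, for every $x\in E$ and $t\ge 0$,
$$
\mathbb{P}_u(X_t=x)=\mathbb{P}_u(t<\tau_\partial)\,\mathbb{P}_u(X_t=x\mid t<\tau_\partial)=e^{-\lambda_0 t}u_x.
$$

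Next I would compute the joint law of $(\tau_\partial,X_{\tau_\partial-},X_{\tau_\partial})$. Fix $x\in E$, $y\in\partial$ and let $N^{x,y}_t$ be the number of jumps from $x$ to $y$ before time $t$. Since $E\cup\partial$ is finite there are a.s.\ finitely many jumps on any bounded interval, and since $y\in\partial$ is absorbing such a jump can occur at most once, so $N^{x,y}_t=\mathbbm{1}_{\{\tau_\partial\le t,\ X_{\tau_\partial-}=x,\ X_{\tau_\partial}=y\}}$. The process $t\mapsto N^{x,y}_t-q_{x,y}\int_0^t\mathbbm{1}_{\{X_s=x\}}\,ds$ is a martingale, so taking $\mathbb{P}_u$-expectations and using the previous display yields, for all $0\le s\le t$,
$$
\mathbb{P}_u\big(s<\tau_\partial\le t,\ X_{\tau_\partial-}=x,\ X_{\tau_\partial}=y\big)=q_{x,y}\int_s^t\mathbb{P}_u(X_r=x)\,dr=u_xq_{x,y}\int_s^t e^{-\lambda_0 r}\,dr.
$$
Letting $t\to\infty$ gives
$$
\mathbb{P}_u\big(\tau_\partial>s,\ X_{\tau_\partial-}=x,\ X_{\tau_\partial}=y\big)=\frac{u_xq_{x,y}}{\lambda_0}\,e^{-\lambda_0 s},
$$
and $s=0$ is precisely the announced law of $(X_{\tau_\partial-},X_{\tau_\partial})$; summing over $x\in E,\ y\in\partial$ and using $uQ=-\lambda_0 u$ confirms it is a probability measure.

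It then remains only to read off independence: the last two displays show that for all $s\ge0$, $x\in E$ and $y\in\partial$,
$$
\mathbb{P}_u\big(\tau_\partial>s,\ X_{\tau_\partial-}=x,\ X_{\tau_\partial}=y\big)=\mathbb{P}_u(\tau_\partial>s)\,\mathbb{P}_u\big(X_{\tau_\partial-}=x,\ X_{\tau_\partial}=y\big),
$$
and since the events $\{\tau_\partial>s\}$ generate $\sigma(\tau_\partial)$ while the events $\{X_{\tau_\partial-}=x,\ X_{\tau_\partial}=y\}$ form a finite partition generating $\sigma(X_{\tau_\partial-},X_{\tau_\partial})$, a monotone-class argument gives independence of $\tau_\partial$ and $(X_{\tau_\partial-},X_{\tau_\partial})$. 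The one genuinely technical point is the compensator identity for $N^{x,y}$, i.e.\ the rigorous form of the statement ``from $x$ the chain jumps to $y$ at rate $q_{x,y}$''; this is where I expect the main (modest) effort to lie, and I would justify it either via Dynkin's formula and the martingale characterization of finite continuous-time Markov chains, or by an explicit decomposition over the embedded jump chain and its exponential holding times. Everything else is bookkeeping with $u\,e^{tQ}=e^{-\lambda_0 t}u$ and~\eqref{eq:def-QSD}.
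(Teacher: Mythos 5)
Your argument is correct, but it takes a genuinely different route from the paper's. The paper proceeds in three separate steps: it gets the exponential law from the loss-of-memory identity $\mathbb{P}_u(t+s<\tau_\partial)=\mathbb{P}_u(t<\tau_\partial)\,\mathbb{P}_u(s<\tau_\partial)$ (Markov property plus~\eqref{eq:def-QSD}) and identifies the parameter via the Perron--Frobenius expansion~\eqref{eq:PF}; it proves independence by the same conditioning trick applied to $\mathbb{E}_u[f(X_{\tau_\partial})\mathbbm{1}_{t<\tau_\partial}]$; and only then does it extract the joint law of $(X_{\tau_\partial-},X_{\tau_\partial})$ from a small-time expansion, writing the probability of exiting through $(x,y)$ before time $t$ as $(tu_xq_{x,y}+O(t^2))/(\lambda_0 t+O(t^2))$ and letting $t\to 0$. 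You instead compute the full joint law of $(\tau_\partial,X_{\tau_\partial-},X_{\tau_\partial})$ in one stroke: the spectral identity $u\,e^{tQ}=e^{-\lambda_0 t}u$ gives both $\mathbb{P}_u(t<\tau_\partial)=e^{-\lambda_0 t}$ and $\mathbb{P}_u(X_t=x)=e^{-\lambda_0 t}u_x$ directly (bypassing the loss-of-memory argument and the appeal to~\eqref{eq:PF}), and the compensator identity for the counting process $N^{x,y}$ then yields the product formula $\mathbb{P}_u(\tau_\partial>s,\,X_{\tau_\partial-}=x,\,X_{\tau_\partial}=y)=\lambda_0^{-1}u_xq_{x,y}e^{-\lambda_0 s}$, from which the exponential law, the independence, and the exit distribution are all read off simultaneously. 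What your route buys is a single clean martingale identity in place of the paper's somewhat informal $O(t^2)$ bookkeeping, together with an explicit formula for the whole joint law; the price is that you must invoke (and, as you correctly flag, justify) the L\'evy-system/compensator fact that $N^{x,y}_t-q_{x,y}\int_0^t\mathbbm{1}_{\{X_s=x\}}\,ds$ is a martingale, which is standard for finite-state chains, whereas the paper's independence argument uses only the Markov property and the QSD definition and therefore transfers verbatim to general state spaces. Your normalization check $\sum_{x,y}u_xq_{x,y}=\lambda_0$ via $uQ=-\lambda_0 u$ matches the computation the paper displays just after the proposition.
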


Note that, as a consequence, the exit position is distributed under $\mathbb{P}_u$ as
$$
\mathbb{P}_u(X_{\tau_\partial}=y)=\frac{\sum_{x\in E}u_x q_{x,y}}{\lambda_0},\quad\forall y\in\partial.
$$
This indeed defines a probability distribution on $\partial$ since
\begin{align*}
  \sum_{y\in\partial}\sum_{x\in E}u_x q_{x,y}=\sum_{x\in E}u_x\left(q_{x,x}-\sum_{y\in E,\ y\neq x}q_{x,y}\right)=\sum_{x\in
    E}u_xq_{x,x}+\sum_{y\in E}u_y(\lambda_0-q_{y,y})=\lambda_0.
\end{align*}
In addition, the position just before exit is distributed under $\mathbb{P}_u$ as
$$
\mathbb{P}_u(X_{\tau_\partial-}=x)=\frac{\sum_{y\in \partial}u_x q_{x,y}}{\lambda_0},\quad\forall x\in E.
$$
This is the quasi-stationary distribution biaised by the exit rate $\sum_{y\in\partial}q_{x,y}$ of the process.

Although the method of proof is quite standard, since the independence between $\tau_\d$ and $X_{\tau_\d -}$ and the last display of
Proposition~\ref{prop:pty-QSD} are not stated in classical references like~\cite{MV}, we give the proof for sake of completeness.

\begin{proof}
  The first property follows from Markov's property and the definition of a quasi-stationary distribution~\eqref{eq:def-QSD}
  \begin{align*}
    \mathbb{P}_u(t+s<\tau_\partial) & =\mathbb{E}_u[\mathbbm{1}_{t<\tau_\partial}\mathbb{P}_{X_t}(s<\tau_\partial)] \\
    & =\mathbb{E}_u[\mathbb{P}_{X_t}(s<\tau_\partial)\mid t<\tau_\partial]\mathbb{P}_u(t<\tau_\partial) \\
    & =\mathbb{P}_u(s<\tau_\partial) \mathbb{P}_u(t<\tau_\partial).
  \end{align*}
  This is the ``loss of memory'' property characterizing exponential random variables, hence $\tau_\partial$ is exponentially
  distributed.\\
Since $\mathbb{P}_u(t<\tau_\partial)=\sum_{x\in E} u_x\sum_{y\in E}(e^{tQ})_{x,y}$, it follows from~\eqref{eq:PF} that
  the parameter of the exponential distribution is $\lambda_0$.

  The independence between $\tau_\partial$ and $X_{\tau_\partial}$ follows from a similar computation: for all bounded measurable
  function $f$ on $E$,
  \begin{align*}
    \mathbb{E}_u(f(X_{\tau_\partial})\mathbbm{1}_{t<\tau_\partial})  & =\mathbb{E}_u[\mathbb{E}_{X_t}(f(X_{\tau_\partial}))\mid
    t<\tau_\partial]\mathbb{P}_u(t<\tau_\partial) \\
    & =\mathbb{E}_u(f(X_{\tau_\partial})) \mathbb{P}_u(t<\tau_\partial).
  \end{align*}
  The independence between $\tau_\partial$ and $X_{\tau_\partial-}$ can be proved exactly the same way.

  Finally, due to the above independence, we have for all $t\geq 0$ and all $x\in E$, $y\in\partial$,
  \begin{align*}
    \mathbb{P}_u(X_{\tau_\partial-}=x,\,X_{\tau_\partial}=y)  & =\mathbb{P}_u(X_{\tau_\partial-}=x,\,X_{\tau_\partial}=y \mid \tau_\partial\leq t) \\
    & =\frac{\mathbb{P}_u(X_{\tau_\partial-}=x,\,X_{\tau_\partial}=y,\,\tau_\partial\leq t)}{\mathbb{P}_u(\tau_\partial\leq t)} \\
    & =\frac{\mathbb{P}_u(X_0=x,\, X_t=y,\, J_1\leq t,\,J_2>t)+O(\mathbb{P}_u(J_2\leq t))}{1-e^{-\lambda_0 t}},
  \end{align*}
  where $(J_i)_{i\geq 1}$ is the sequence of jump times of the process $(X_t,t\geq 0)$. Standard computations for discrete Markov
  processes entail
  \begin{align*}
    \mathbb{P}_u(X_{\tau_\partial-}=x,\,X_{\tau_\partial}=y) & =\frac{tu_xq_{x,y}+O(t^2)}{\lambda_0
      t+O(t^2)}\xrightarrow[t\rightarrow 0]{}\frac{u_x q_{x,y}}{\lambda_0}.
  \end{align*}
  This concludes the proof of the Proposition.
\end{proof}


\section{Main results}
\label{sec:SDE}

We study models with discrete state space as those presented above and diffusion models with boundary conditions as in
Section~\ref{sec:polytope}. We obtain estimates on the QSD in both cases and on the Perron-Frobenius eigenvector (resp.\ Dirichlet
eigenfunction) in the discrete case (resp.\ continuous case).

\subsection{Distributions of exit time and exit position in the finite case}
\label{sec:non-asy}
Propositions 3.1. and 3.2. allow us to give estimates on the distributions of the exit time $\tau_\partial$ and the exit position
$f(X_{\tau_\partial})$ of the process depending on $\lambda_0$ and $\lambda_1$. First,~\eqref{eq:eta} entails that
$$
\mathbb{P}_x(t<\tau_\partial)\sim v_xe^{-\lambda_0 t}\quad\text{when }t\rightarrow+\infty,
$$
hence the distribution of the exit time $\tau_\partial$ has an exponential tail. We can also deduce from~\eqref{eq:eta} estimates on
the expectation of functions of $\tau_\partial$: for example,
$$
\left|\mathbb{E}_x(\tau_\partial)-\frac{v_x}{\lambda_0}\right|\leq\frac{C}{\lambda_1}.
$$
Note that this estimate is accurate provided that $\lambda_1\ll\lambda_0$. This is the regime where the state of the process can be
approximated by the quasi-stationary distribution for intermediate times, as explained in the next proposition.

\begin{prop}
  \label{prop:2}
  For all $x\in E$ and $t\geq 0$,
  \begin{equation}
    \label{eq:estimate-eta}
    e^{\lambda_0 t}+Ce^{-(\lambda_1-\lambda_0)t}-\frac{e^{\lambda_0 t}-1+Ce^{-(\lambda_1-\lambda_0)t}}{u_x}\leq v_x\leq e^{\lambda_0 t}+Ce^{-(\lambda_1-\lambda_0)t}.
  \end{equation}
  In addition, for all $x,y\in E$ and $t\geq 0$,
  \begin{equation}
    \label{eq:estimate-QSD}
    \left|\mathbb{P}_x(X_t=y)-u_y\right|\leq \frac{u_y}{u_x}(1-e^{-\lambda_0 t}+Ce^{-\lambda_1 t})+Ce^{-\lambda_1 t}\left(v_x+2
      u_y+Ce^{-(\lambda_1-\lambda_0)t}\right).
  \end{equation}
\end{prop}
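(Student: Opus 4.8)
The plan is to derive both estimates purely from Proposition~\ref{prop:PF} (i.e.\ from~\eqref{eq:expo-cvgce} and~\eqref{eq:eta}) together with the normalisation identities $\sum_{z}u_z=1$ and $\sum_z u_z v_z=1$, and — for the second estimate — from the already-proved first estimate. In particular the two inequalities have to be proved in this order.

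For~\eqref{eq:estimate-eta}, the upper bound is immediate: since $\mathbb{P}_x(t<\tau_\partial)\le 1$, inequality~\eqref{eq:eta} gives $v_x\le e^{\lambda_0 t}\mathbb{P}_x(t<\tau_\partial)+Ce^{-(\lambda_1-\lambda_0)t}\le e^{\lambda_0 t}+Ce^{-(\lambda_1-\lambda_0)t}$. For the lower bound I would use the normalisation $\sum_{z\in E}u_zv_z=1$ to single out the coordinate $x$: writing $u_xv_x=1-\sum_{z\neq x}u_zv_z$, inserting the upper bound on each $v_z$ with $z\neq x$ just obtained, and using $\sum_{z\neq x}u_z=1-u_x$, one gets $u_xv_x\ge 1-(e^{\lambda_0 t}+Ce^{-(\lambda_1-\lambda_0)t})(1-u_x)$; dividing by $u_x$ and rearranging (using $(1-u_x)/u_x=1/u_x-1$) yields exactly the left-hand inequality of~\eqref{eq:estimate-eta}.

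For~\eqref{eq:estimate-QSD}, the starting point is the factorisation $\mathbb{P}_x(X_t=y)=\mathbb{P}_x(X_t=y\mid t<\tau_\partial)\,\mathbb{P}_x(t<\tau_\partial)$. By~\eqref{eq:expo-cvgce} the conditional probability equals $u_y+s$ with $|s|\le Ce^{-(\lambda_1-\lambda_0)t}$, and by~\eqref{eq:eta}, after multiplying by $e^{-\lambda_0 t}$, the survival probability equals $e^{-\lambda_0 t}v_x+r$ with $|r|\le Ce^{-\lambda_1 t}$. Expanding the product and subtracting $u_y$ leaves $u_y(e^{-\lambda_0 t}v_x-1)+u_y r+s\,e^{-\lambda_0 t}v_x+s r$. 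The last three terms are respectively of size at most $Ce^{-\lambda_1 t}u_y$, $Ce^{-\lambda_1 t}v_x$ (using $e^{-\lambda_0 t}v_x\cdot Ce^{-(\lambda_1-\lambda_0)t}=Ce^{-\lambda_1 t}v_x$) and $Ce^{-\lambda_1 t}\cdot Ce^{-(\lambda_1-\lambda_0)t}$, hence together bounded by $Ce^{-\lambda_1 t}(v_x+2u_y+Ce^{-(\lambda_1-\lambda_0)t})$. For the remaining term $u_y(e^{-\lambda_0 t}v_x-1)$ one multiplies both inequalities of~\eqref{eq:estimate-eta} by $e^{-\lambda_0 t}$, which gives $|e^{-\lambda_0 t}v_x-1|\le \frac{1}{u_x}\bigl(1-e^{-\lambda_0 t}+Ce^{-\lambda_1 t}\bigr)$ (the upper side is even just $Ce^{-\lambda_1 t}$, which is smaller since $u_x\le 1$); multiplying by $u_y$ produces the term $\frac{u_y}{u_x}\bigl(1-e^{-\lambda_0 t}+Ce^{-\lambda_1 t}\bigr)$. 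Adding the two contributions gives~\eqref{eq:estimate-QSD}.

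The only real difficulty is the error bookkeeping in the second estimate: the naive approximation $\mathbb{P}_x(t<\tau_\partial)\approx e^{-\lambda_0 t}v_x$ does not by itself control the quantity $e^{-\lambda_0 t}v_x-1$ that appears after the expansion, so one genuinely needs~\eqref{eq:estimate-eta} here. A slightly cleaner, and in fact sharper, route to the factor $1-e^{-\lambda_0 t}$ in the first summand is to bound $1-\mathbb{P}_x(t<\tau_\partial)$ directly from $e^{-\lambda_0 t}=\mathbb{P}_u(t<\tau_\partial)=\sum_z u_z\mathbb{P}_z(t<\tau_\partial)\le u_x\mathbb{P}_x(t<\tau_\partial)+1-u_x$ (Proposition~\ref{prop:pty-QSD}), which gives $1-\mathbb{P}_x(t<\tau_\partial)\le (1-e^{-\lambda_0 t})/u_x$ without invoking~\eqref{eq:estimate-eta}; this yields the same estimate up to the precise form of the constant and of the lower-order terms.
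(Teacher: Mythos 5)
Your proposal is correct and follows essentially the same route as the paper: the upper bound of \eqref{eq:estimate-eta} from \eqref{eq:eta} with $\mathbb{P}_x(t<\tau_\partial)\leq 1$, the lower bound by isolating $u_xv_x$ in the normalisation $\sum_z u_zv_z=1$, and then \eqref{eq:estimate-QSD} by the same triangle-inequality decomposition combining \eqref{eq:expo-cvgce}, \eqref{eq:eta} and \eqref{eq:estimate-eta} (your four-term expansion regroups into the paper's three terms, and your bookkeeping even yields a marginally sharper constant). The closing remark giving $1-\mathbb{P}_x(t<\tau_\partial)\leq (1-e^{-\lambda_0 t})/u_x$ directly from $\mathbb{P}_u(t<\tau_\partial)=e^{-\lambda_0 t}$ is a valid minor variant not used in the paper.
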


In the case where $\lambda_0\ll\lambda_1$, for $1/\lambda_1\ll t\ll 1/\lambda_0$, recalling that $v_x$ is bounded, we deduce that
$\mathbb{P}_x(X_t=y)\approx u_y$ for all $x,y\in E$.

\begin{proof}
  The upper bound in~\eqref{eq:estimate-eta} follows directly from~\eqref{eq:eta}. For the lower bound, we use the upper bound and
  the fact that $\sum_{x\in E}u_x=\sum_{x\in E}u_xv_x=1$ to obtain
  \begin{align*}
    u_xv_x & =1-\sum_{y\in E,\ y\neq x}u_yv_y \\ & \geq 1-e^{\lambda_0 t}\sum_{y\in E,\ y\neq x}\left(u_y-Ce^{-(\lambda_1-\lambda_0)t}u_y\right)
    \\ & =1-e^{\lambda_0 t}+u_x e^{\lambda_0 t}-Ce^{-(\lambda_1-\lambda_0)t}+Cu_xe^{-(\lambda_1-\lambda_0)t}.
  \end{align*}
  The lower bound in~\eqref{eq:estimate-eta} follows.

  To obtain~\eqref{eq:estimate-QSD}, we combine~\eqref{eq:expo-cvgce},~\eqref{eq:eta} and~\eqref{eq:estimate-eta} in
  \begin{align*}
    & \left|\mathbb{P}_x(X_t=y)-u_y\right| \\
    & \leq \left|\mathbb{P}_x(X_t=y\mid
      t<\tau_\partial)-u_y\right|\mathbb{P}_x(t<\tau_\partial)+u_y\left|\mathbb{P}_x(t<\tau_\partial)-v_x e^{-\lambda_0
        t}\right|+u_y\left|v_x e^{-\lambda_0 t}-1\right| \\
    & \leq Ce^{-(\lambda_1-\lambda_0)t}(v_x e^{-\lambda_0 t}+C e^{-\lambda_1 t})+Cu_y e^{-\lambda_1 t}+Cu_y e^{-\lambda_1
      t}+\frac{u_y}{u_x}\left(1-e^{-\lambda_0 t}+C e^{-\lambda_1 t}\right). \qedhere
  \end{align*}
\end{proof}

We also obtain estimates on the distribution of $(X_{\tau_\partial-},X_{\tau_\partial})$.

\begin{prop}
  \label{prop:unconditioned}
  For all $x,y\in E$, all $z\in\partial$ and all $t\geq 0$,
  \begin{equation}
    \label{eq:unconditioned}
    \left|\mathbb{P}_x (X_{\tau_\partial-}=y,\,X_{\tau_\partial}=z)-\frac{u_yq_{y,z}}{\lambda_0}\right| \leq
    2C\frac{1-e^{-\lambda_0 t}}{u_x}+Ce^{-\lambda_1 t}\left(4+v_x+Ce^{-(\lambda_1-\lambda_0)t}\right).
  \end{equation}
\end{prop}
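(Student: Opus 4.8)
The plan is to redo the computation behind Proposition~\ref{prop:pty-QSD} with an arbitrary initial state $x$ in place of the quasi-stationary distribution $u$, and to control the resulting error by means of Propositions~\ref{prop:PF} and~\ref{prop:2}. Set $g(w):=\mathbb{P}_w(X_{\tau_\partial-}=y,\,X_{\tau_\partial}=z)$ for $w\in E$. Applying the Markov property at a fixed time $t\geq 0$ and using that $\{X_t=w\}\subseteq\{t<\tau_\partial\}$ for every $w\in E$ (because $\partial$ is absorbing), one gets the decomposition
\begin{equation*}
  \mathbb{P}_x(X_{\tau_\partial-}=y,\,X_{\tau_\partial}=z)=\mathbb{P}_x(X_{\tau_\partial-}=y,\,X_{\tau_\partial}=z,\,\tau_\partial\leq t)+\sum_{w\in E}\mathbb{P}_x(X_t=w)\,g(w).
\end{equation*}
By Proposition~\ref{prop:pty-QSD} applied to the starting distribution $u$ we have $\sum_{w\in E}u_w\,g(w)=\mathbb{P}_u(X_{\tau_\partial-}=y,\,X_{\tau_\partial}=z)=u_yq_{y,z}/\lambda_0$, so the triangle inequality gives
\begin{equation*}
  \left|\mathbb{P}_x(X_{\tau_\partial-}=y,\,X_{\tau_\partial}=z)-\frac{u_yq_{y,z}}{\lambda_0}\right|\leq\mathbb{P}_x(\tau_\partial\leq t)+\sum_{w\in E}\left|\mathbb{P}_x(X_t=w)-u_w\right|g(w).
\end{equation*}

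It then remains to bound the two terms on the right-hand side, and this is where Propositions~\ref{prop:PF} and~\ref{prop:2} enter. For the first term, chaining~\eqref{eq:eta} with the lower bound in~\eqref{eq:estimate-eta} gives
\begin{equation*}
  e^{\lambda_0 t}\mathbb{P}_x(t<\tau_\partial)\geq v_x-Ce^{-(\lambda_1-\lambda_0)t}\geq e^{\lambda_0 t}-\frac{e^{\lambda_0 t}-1+Ce^{-(\lambda_1-\lambda_0)t}}{u_x},
\end{equation*}
whence $\mathbb{P}_x(\tau_\partial\leq t)\leq(1-e^{-\lambda_0 t}+Ce^{-\lambda_1 t})/u_x$, which already supplies one of the two $1/u_x$-terms in~\eqref{eq:unconditioned}. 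For the second term, I would insert the estimate~\eqref{eq:estimate-QSD} of Proposition~\ref{prop:2} together with $0\leq g(w)\leq 1$. The contributions that carry a factor $u_w$ are handled using $\sum_{w\in E}u_w\,g(w)=u_yq_{y,z}/\lambda_0\leq 1$, and produce the second $1/u_x$-term together with the terms of order $e^{-\lambda_1 t}$; the remaining contribution is of the form $Ce^{-\lambda_1 t}\bigl(v_x+Ce^{-(\lambda_1-\lambda_0)t}\bigr)\sum_{w\in E}g(w)$, and $\sum_{w\in E}g(w)\leq|E|<\infty$, so this factor is absorbed into the constant $C$. Collecting the terms and enlarging $C$ if necessary yields~\eqref{eq:unconditioned}.

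The main point to watch is precisely this last absorption: unlike~\eqref{eq:estimate-QSD} and~\eqref{eq:estimate-eta}, the bound~\eqref{eq:unconditioned} involves a sum over the whole state space through $g$, and the error term in~\eqref{eq:estimate-QSD} without a $u_w$ prefactor does not vanish after this summation, forcing a constant depending on $|E|$; this is harmless, since all constants in Section~\ref{sec:general-QSD}, starting with the one in~\eqref{eq:PF}, already depend on the model through Perron--Frobenius (alternatively, one can avoid invoking~\eqref{eq:estimate-QSD} as a black box and reuse the three-term splitting of $\mathbb{P}_x(X_t=w)-u_w$ from the proof of Proposition~\ref{prop:2}, the only state-space sum then being multiplied by $\mathbb{P}_x(t<\tau_\partial)\leq v_xe^{-\lambda_0 t}+Ce^{-\lambda_1 t}$, which reproduces the $e^{-\lambda_1 t}(v_x+\cdots)$ shape). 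Everything else is routine rearrangement. As a sanity check, the right-hand side of~\eqref{eq:unconditioned} should remain bounded as $t\to\infty$ (it does, since both $\mathbb{P}_x(\tau_\partial\leq t)$ and the summed error stay bounded by a multiple of $1/u_x$), so that the bound can be optimized over $t$ and becomes informative in the regime $\lambda_0\ll\lambda_1$ discussed after Proposition~\ref{prop:2}.
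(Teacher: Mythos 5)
Your proposal is correct and follows essentially the same route as the paper: both decompose according to $\{\tau_\partial\le t\}$ versus $\{\tau_\partial> t\}$, apply the Markov property at time $t$, compare the (conditional) law of $X_t$ to the quasi-stationary distribution $u$, and invoke the exit formula of Proposition~\ref{prop:pty-QSD} under $\mathbb{P}_u$, the only cosmetic difference being that you cite~\eqref{eq:estimate-QSD} as a black box where the paper re-runs the three-term splitting with a generic test function $f$ on $E\times\partial$. The state-space summation you flag (the factor $|E|$ absorbed into $C$) is implicitly present in the paper's own argument as well, so your constant bookkeeping is at the same level of precision as the original.
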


Again, the inequality~\eqref{eq:unconditioned} gives an accurate estimate on the distribution of
$(X_{\tau_\partial-},X_{\tau_\partial})$ in the case where $\lambda_0\ll\lambda_1$: taking $1/\lambda_1\ll t\ll 1/\lambda_0$, it
follows that $\mathbb{P}_x(X_{\tau_\partial-}=y,X_{\tau_\partial}=z)\approx \lambda_0^{-1} u_yq_{y,z}$ for all $x,y\in E$ and
$z\in\partial$.

\begin{proof}
  The proof makes use of the estimates of Proposition~\ref{prop:PF} and the properties of Proposition~\ref{prop:pty-QSD}: for all
  bounded measurable function $f$ on $E\times\partial$,
  \begin{multline*}
    \left|\mathbb{E}_x \left[f(X_{\tau_\partial-},X_{\tau_\partial})\mathbbm{1}_{t<\tau_\partial}\right]- v_x e^{-\lambda_0
        t}\mathbb{E}_u[f(X_{\tau_\partial-},X_{\tau_\partial})]\right| \\
    \begin{aligned}
      & \leq\mathbb{P}_x(t<\tau_\partial)\left|\mathbb{E}_x \left[f(X_{\tau_\partial-},X_{\tau_\partial})\mid t<\tau_\partial\right]-\mathbb{E}_u
        \left[f(X_{\tau_\partial-},X_{\tau_\partial})\mid t<\tau_\partial\right]\right| \\ & +\mathbb{E}_u
      \left[f(X_{\tau_\partial-},X_{\tau_\partial})\right]\left|\mathbb{P}_x(t<\tau_\partial)-v_x e^{-\lambda_0 t}\right| \\
      & \leq \left(e^{-\lambda_0 t}v_x+Ce^{-\lambda_1 t}\right)C\|f\|_\infty e^{-(\lambda_1-\lambda_0)t}+C\|f\|_\infty e^{-\lambda_1
        t}.
    \end{aligned}
  \end{multline*}
  This entails
  \begin{multline*}
    \left|\mathbb{E}_x \left[f(X_{\tau_\partial-},X_{\tau_\partial})\mathbbm{1}_{t<\tau_\partial}\right]- v_x e^{-\lambda_0
        t}\mathbb{E}_u[f(X_{\tau_\partial-},X_{\tau_\partial})]\right| \\ \leq Ce^{-\lambda_1
      t}\|f\|_\infty\left(1+v_x+Ce^{-(\lambda_1-\lambda_0)t}\right).
  \end{multline*}
  Therefore, it follows from the inequality
  \begin{align*}
    \mathbb{P}_x(\tau_\partial\leq t) & \leq 1-v_xe^{-\lambda_0 t}+Ce^{-\lambda_1 t}
  \end{align*}
  that
  \begin{multline*}
    \left|\mathbb{E}_x
      \left[f(X_{\tau_\partial-},X_{\tau_\partial})\right]-\mathbb{E}_u[f(X_{\tau_\partial-},X_{\tau_\partial})]\right| \\ \leq
    2\|f\|_\infty\left(1-v_xe^{-\lambda_0 t}\right)+Ce^{-\lambda_1 t}\|f\|_\infty\left(2+v_x+Ce^{-(\lambda_1-\lambda_0)t}\right).
  \end{multline*}
  We conclude from~\eqref{eq:estimate-eta} and Proposition~\ref{prop:pty-QSD}.
\end{proof}

The previous results are sharp under the condition $\lambda_0\ll\lambda_1$, which means that absorption takes a long time (the typical
absorption time is $1/\lambda_0$) and the process has a tendency to stay away from the absorbing boundaries. This is for example the
case when the random walk converges to a deterministic process for which the interior of the domain is stable
(see~\cite{FS,CCM16,CCM17}). Our estimates can be applied to the examples of Section~\ref{sec:ex} by numerically computing the
eigenvalues $\lambda_0$ and $\lambda_1$.


\subsection{The multi-dimensional diffusion model}
\label{sec:QSD-diff}

The previous section gave results on absorbed Markov processes in finite state space, like those of the examples of
Section~\ref{sec:ex}. It is also common to model deadlocks replacing discrete random walks with diffusion processes in subsets of
$\mathbb{R}^d$ with partly absorbing and reflecting boundaries, like in the example presented in Section~\ref{sec:polytope}. In this
case, quasi-stationary distributions may still be defined, although asymptotic properties as those of Propositions~\ref{prop:PF}, and
hence estimates as in Proposition~\ref{prop:2}, are harder to obtain. A general criterion for such results was recently obtained
in~\cite{CV}, which has been applied to various classes of stochastic processes in~\cite{CCV, CV, CV2, CV3}. A particular case of
diffusion in a domain delimited by hyperplanes was studied in~\cite{CV3} using non-linear Lyapunov criteria. Howerer, situations with
parts of the boundary being absorbing and other parts reflecting were never studied. We are able to obtain the next result for
general colliding stacks models.

\begin{thm}
  \label{thm:QSD-diff}
  Consider the diffusion process $X$ evolving in $E\cup\d$ as defined in~\eqref{eq:domain} and~\eqref{eq:boundary-domain} with
  $m_j=1$ for $j\in\{1,\ldots,r\}$ arbitrary nonnegative vectors $v_j$, $a_i=+\infty$ for $i\in\{1,\ldots,d\}$, with the hyperplanes
  $x_i=0$, $i\in\{1,\ldots,d\}$ as normal reflecting boundaries and the hyperplanes $\langle x,v_j\rangle =1$ as absorbing. Assume
  that the infinitesimal generator of $X$ is given for all smooth function $f$ vanishing on the hyperplanes $\langle x,v_j\rangle =1$
  and with zero normal gradient on the hyperplanes $x_i=0$, by
  \begin{align}
    \label{eq:inf-gen}
    L f(x)=
    \sum_{i,k=1}^d a_{ik}(x)\frac{\d^2 f}{\d x_i \d x_k}(x)+\sum_{i=1}^d b_i(x)\frac{\d f}{\d x_i}(x),
  \end{align}
  where the matrix $a=(a_{ij})_{1\leq i,j\leq d}$ is symmetric and uniformly elliptic and $b$ is uniformly bounded,
  both H\"older continuous on $E\cup\d$. Then $X$ admits a unique quasi-stationary distribution $\alpha$ and there exist positive
  constants $C,\gamma>0$ such that, for all $t\geq 0$ and all probability measure $\mu$ on $E$,
  \begin{align}
    \label{eq:expo-conv-diff}
    \left\|\mathbb{P}_\mu(X_t\in\cdot\mid t<\tau_\d)-\alpha\right\|_{TV}\leq C e^{-\gamma t}.
  \end{align}
\end{thm}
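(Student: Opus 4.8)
The natural strategy is to verify the abstract criterion of Champagnat--Villemonais \cite{CV}, which characterizes exponential convergence in total variation to a unique QSD by a single condition (Assumption (A) in \cite{CV}): the existence of a probability measure $\nu$ on $E$, a constant $t_0>0$ and constants $c_1,c_2\in(0,1]$ such that (A1) $\mathbb{P}_x(X_{t_0}\in\cdot\mid t_0<\tau_\d)\geq c_1\,\nu(\cdot)$ for all $x\in E$, and (A2) $\mathbb{P}_\nu(t<\tau_\d)\geq c_2\,\sup_{x\in E}\mathbb{P}_x(t<\tau_\d)$ for all $t\geq 0$. The plan is to establish (A1) and (A2) for the reflected--absorbed diffusion with generator \eqref{eq:inf-gen}.

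For (A1), the first step is to exhibit a Foster--Lyapunov-type control that confines the process: since $a_i=+\infty$, the domain $E$ is unbounded, so I would build a function $\varphi$ (for instance $\varphi(x)=\exp(\theta\langle x,w\rangle)$ for a suitable vector $w$ with positive entries and small $\theta>0$) satisfying $L\varphi\leq -\kappa\varphi + K\,\mathbbm{1}_{K_R}$ on $E$ for some compact $K_R\subset E$, using uniform ellipticity of $a$ and boundedness of $b$; the Neumann boundary condition on the hyperplanes $x_i=0$ is compatible with such $\varphi$ having nonpositive normal derivative there. This gives a uniform-in-starting-point bound forcing the conditioned process into a fixed compact set $K_R$ well inside $E$ (away from the absorbing faces) by some time $t_0/2$ with probability bounded below. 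The second step is a Harnack / parabolic regularity argument: on the compact set $K_R$, standard interior and Neumann-boundary parabolic Harnack inequalities for uniformly elliptic operators with Hölder coefficients (Krylov--Safonov, or the classical Aronson Gaussian bounds for the Neumann heat kernel) yield a lower bound $p_{t_0/2}(x,y)\geq c\,\mathbbm{1}_{K_R\times K_R'}$ on the sub-Markovian transition density, where $K_R'$ is a slightly smaller compact set. Composing these two steps over $[0,t_0]$ produces the minorization (A1) with $\nu$ the normalized Lebesgue measure on $K_R'$.

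For (A2), I would first note that $\sup_x\mathbb{P}_x(t<\tau_\d)\leq\mathbb{P}_{\mu_R}(t-t_0<\tau_\d)/c_1$ once $t\geq t_0$, by applying (A1) and the Markov property — so the supremum is, up to constants, comparable to the survival probability started from any fixed interior point. Then (A2) reduces to comparing survival from $\nu$ with survival from a point: since $\nu$ charges an open set in the interior, and by an interior Harnack inequality the ratio $\mathbb{P}_x(t<\tau_\d)/\mathbb{P}_{x'}(t<\tau_\d)$ for $x,x'$ in a fixed compact interior set is bounded uniformly in $t$ (this is the standard "Champagnat--Villemonais" verification step, cf.\ \cite{CV2,CV3}), condition (A2) follows. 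Finally I would invoke the main theorem of \cite{CV} to conclude existence and uniqueness of $\alpha$ and the bound \eqref{eq:expo-conv-diff}.

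The main obstacle is the mixed boundary behavior: the diffusion is \emph{reflected} on the hyperplanes $x_i=0$ and \emph{absorbed} on the hyperplanes $\langle x,v_j\rangle=1$, and these families of hyperplanes intersect, so the domain has non-smooth corners where a reflecting face meets an absorbing face. Constructing a well-defined reflected diffusion there (existence and uniqueness of the Skorokhod problem, e.g.\ via the results on oblique/normal reflection in convex polyhedra) and obtaining the Neumann-boundary Harnack inequality and heat-kernel lower bounds \emph{uniformly up to such corners} is the delicate point; I expect to handle it by the reflection principle for the Neumann faces (unfolding across $x_i=0$ to reduce locally to an interior estimate in a larger, possibly non-convex but still Lipschitz domain) combined with the fact that the compact set $K_R'$ in the minorization can always be chosen to avoid a neighborhood of the absorbing faces, so the delicate estimates are only ever needed away from the absorbing part of the boundary.
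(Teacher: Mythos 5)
Your target (Condition~(A) of \cite{CV}) is the right one, and your treatment of the reflecting faces by unfolding across the hyperplanes $x_i=0$ is exactly what the paper does. But there is a genuine gap in your verification of the minorization (A1), and it sits precisely where all the work in this problem lies: at starting points $x$ arbitrarily close to the \emph{absorbing} hyperplanes $\langle x,v_j\rangle=1$. For such $x$ the survival probability $\mathbb{P}_x(t_0<\tau_\d)$ is of order $d(x,\d)$ and tends to $0$, so (A1) amounts to bounding $\mathbb{P}_x(X_{t_0}\in\cdot,\,t_0<\tau_\d)$ from below by $c_1\,\nu(\cdot)\,\mathbb{P}_x(t_0<\tau_\d)$ --- a comparison of two quantities that both degenerate. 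Your Foster--Lyapunov function $\varphi(x)=\exp(\theta\langle x,w\rangle)$ is bounded above and below by positive constants near the absorbing faces, so the drift inequality $L\varphi\leq-\kappa\varphi+K\mathbbm{1}_{K_R}$ carries no information about this conditional behavior; it only controls escape to infinity, which is not the issue here (the paper's proof in fact treats $E'$ as bounded). Likewise, interior and Neumann-boundary Harnack inequalities on a compact set $K_R$ well inside $E$ say nothing about trajectories started at distance $\varepsilon$ from an absorbing face. You have therefore only established (A1) for initial conditions in a fixed compact set, which does not suffice. What is needed is a mechanism that sees the absorbing boundary: the paper uses the pair of boundary-degenerate functions $\varphi(x)=\prod_{\varepsilon,j}(1-\langle \varepsilon v_j,x\rangle)^{\alpha}$ and $V(x)=\prod_{\varepsilon,j}(1-\langle \varepsilon v_j,x\rangle)^{\beta}$ on the unfolded domain, fed into the nonlinear Lyapunov criterion of \cite[Proposition~2.7]{CV3} (namely $L\varphi\geq -c\mathbbm{1}_K$ and $LV+c'V^{1+\delta}/\varphi^{\delta}\leq c''\varphi$), together with the one-sided bound $\mathbb{P}_x(t_0<\tau_\d)\leq A\,d(x,\d E')\leq A V(x)$. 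An alternative would be a boundary gradient or coupling estimate giving the matching lower bound $\mathbb{P}_x(X_{t_0}\in K,\,t_0<\tau_\d)\geq c\,d(x,\d)$ as in \cite{CCV}; but some device of this kind is indispensable, and your proposal contains none.

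Two secondary remarks. First, you identify the corners where reflecting and absorbing faces meet as the main obstacle; in the paper's proof this turns out to be benign, since after unfolding the process is an ordinary killed diffusion on a symmetric polytope and the corners play no special role, whereas the boundary degeneracy described above is the real difficulty. Second, your unfolding step misses a point the paper has to address: extending the drift by $b_i(y)=\mathrm{sign}(y_i)\,b_i(|y|)$ destroys H\"older continuity unless $b_i$ vanishes on $\{x_i=0\}$, so the paper first treats that case and then observes that the Lyapunov computations only require $b$ bounded and measurable.
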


The conclusion of Theorem~\ref{thm:QSD-diff} is equivalent to Condition~(A) in~\cite{CV}. This has several implications. For
instance, $e^{\lambda_0 t}\mathbb{P}_x(t<\tau_\d)$ converges when $t\rightarrow+\infty$, uniformly in $x$, to a positive, bounded
eigenfunction $\eta$ of $L$ with Dirichlet boundary condition at the absorbing boundary, for the eigenvalue $-\lambda_0$,
characterized by the relation $\mathbb{P}_\alpha(t<\tau_\d)=e^{-\lambda_0 t}$, $\forall t\geq 0$~\cite[Proposition~2.3]{CV}.
Moreover, it implies a spectral gap property~\cite[Corollary~2.4]{CV}, the existence and exponential ergodicity of the so-called
$Q$-process, defined as the process $X$ conditioned to never hit the absorbing part of the boundary~\cite[Theorem~3.1]{CV} and a
conditional ergodic property~\cite{CV2}.

Note that the assumption that the constants $m_j$ are all equal to 1 may be relaxed by applying a linear scaling of coordinates,
possibly different for each coordinate. Such a scaling has no impact on the required assumptions on the coefficients of the
diffusion.


\begin{proof}[Proof of Theorem~\ref{thm:QSD-diff}] 
We start by considering the case where $b_i(x)=0$ whenever $x_i=0$. We shall extend, in a second step, our result to the general case.

We extend the definition of $\sigma$ and $b$ on
\begin{align*}
  {E'=\{x\in\mathbb{R}^d,\ \langle |x|,v_j\rangle<1\ \forall j\in\{1,\ldots,d\}\},}
\end{align*}
where $|x|$ is defined as the vector $(|x_1|,\ldots,|x_d|)$, by symmetry over the hyperplanes $x_i=0$, $i\in\{1,\ldots,d\}$: more
precisely, for all $x\in E$, we set $a(y)=a(x)$ if $|y|=x$ and $b_i(y)=\text{sign}(y_i)b_i(x)$ for all $i=1,\ldots,d$. Since
$b_i(x)=0$ whenever $x_i=0$, the extended coefficients are also H\"older continuous on $E'$. Hence, we can define a diffusion process
$Y$ evolving in $E'$ with infinitesimal generator~\eqref{eq:inf-gen}, absorbed at the boundary of $E'$. It follows from standard
properties of diffusions with normal reflexion on hyperplanes that the process $(|Y^1_t|,\ldots,|Y^d_t|)_{t\geq 0}$ has the same law
as the process $X$ defined in the statement of Theorem~\ref{thm:QSD-diff}. In the proof, we show that~\eqref{eq:expo-conv-diff} holds
true for $Y$, so that it holds true for $X$.

Since $L$ is assumed to be elliptic, there exist two constants $\overline{\lambda}\geq \underline{\lambda}>0$ such that 
$$\overline{\lambda}\|x\|^2\geq \langle x,a(x) x\rangle\geq \underline{\lambda} \|x\|^2,\ \text{ for all }x\in E'.$$ 
In order to prove that Condition~(A) of~\cite{CV} holds true, we use the Lyapunov type criterion proved in~\cite[Proposition
2.7]{CV3}, with the functions
\begin{align*}
\varphi(x)&:=\prod_{\varepsilon\in\{-1,1\}^d}\prod_{j=1}^r (1-\langle \varepsilon v_j,x\rangle)^{\alpha},\\
V(x)&:=\prod_{\varepsilon\in\{-1,1\}^d}\prod_{j=1}^r (1-\langle \varepsilon v_j,x\rangle)^{\beta},
\end{align*}
where $\varepsilon v_j$ is intended as a componentwise product $\varepsilon v_j=(\varepsilon_i v_{j,i})_{1\leq i\leq d}$,
$$
\alpha=\frac{1+\|a\|_\infty\frac{4d^3 2^dr \bar{v}}{\underline{v}^2}}{\underline{\lambda}}\quad\text{and}\quad
\beta=\frac{\underline{\lambda}}{2r 2^d\overline{\lambda}}\wedge \frac{1}{2^d r},
$$
where $\bar{v}=\sup_{1\leq j \leq r,\ 1\leq i\leq d}v_{j,i}$ and $\underline{v}=\inf_{i,j\ s.t.\ v_{j,i}>0} v_{j,i}$. Note that both $V$
and $\varphi$ are smooth functions on $E'$.

\smallskip
Assumptions~2 and~4 in~\cite{CV3} are satisfied because $L$ is, in particular, locally elliptic with H\"older coefficients (the
arguments are detailed in~\cite[Section~4]{CV3}). Assumption~3 in~\cite{CV3} is an easy consequence of the boundedness of $E'$ and of
the uniform ellipticity of~$L$. One easily checks by standard arguments (see for instance~\cite[Section~3]{CCV}) that, for all
$t_0>0$, there exist a constant $A>0$ such that
\begin{align*}
\mathbb{P}_x(t_0 < \tau_\d)\leq A d(x,\d E'). 
\end{align*}
Since $d(x,\d E')=\min_{\varepsilon\in\{-1,1\}^d} (1-\langle \varepsilon v_j,x\rangle)$, we deduce that
\begin{align*}
  \mathbb{P}_x(t_0 < \tau_\d)\leq A\prod_{\varepsilon\in\{-1,1\}^d}\prod_{j=1}^r (1-\langle \varepsilon v_j,x\rangle)^{1/2^d r}\leq A V(x),
\end{align*}
which is Condition~(2.9) of Assumption~1 in~\cite{CV3}.

Hence, it remains to prove that there exist a compact set $K\subset E'$ and positive constants $\delta,c,c',c''>0$ such that,
for all $x\in E'$,
\begin{align}
\label{eq:proof-conclusion}
L\varphi(x)\geq -c\mathbf{1}_{x\in K}\text{ and }
LV+c'\frac{V^{1+\delta}}{\varphi^\delta}\leq c''\varphi.
\end{align}
Indeed, by \cite[Proposition~2.7]{CV3}, this implies that Assumption~1 in~\cite{CV3} is satisfied, so that~\eqref{eq:expo-conv-diff} holds true (by~\cite[Corollary~2.8]{CV3}).

Setting 
$$
f_i(x):= \sum_{\varepsilon\in\{-1,1\}^d}\sum_{j=1}^r \frac{\varepsilon_i v_{ji}}{1-\langle \varepsilon v_j,x\rangle},\quad\forall
i\in\{1,\ldots,d\},
$$
we have, for all $x\in E'$, 
\begin{align*}
L\varphi(x)&=\langle a(x)\nabla,\nabla\rangle \varphi(x)+\langle b(x),\nabla\rangle\varphi(x)\\
&=-\alpha  \langle a(x)\nabla,\varphi(x)f(x)\rangle-\alpha\langle b(x),\varphi(x)f(x)\rangle\\
&=\alpha\varphi(x)\langle a(x)(\alpha f(x)-\nabla),f(x)\rangle-\alpha\varphi(x)\langle b(x),f(x)\rangle\\
&\geq \alpha\varphi(x)\left[\alpha\underline{\lambda}\|f(x)\|^2-\|a\|_\infty\langle\nabla,f\rangle-\sqrt{d}\|b\|_\infty
  \|f(x)\|\right].
\end{align*}
Now, 
\begin{align*}
  \langle\nabla,f\rangle(x) & = \sum_{\varepsilon\in\{-1,1\}^d}\sum_{i=1}^d\frac{\|v_{j}\|^2}{(1-\langle \varepsilon v_j,x\rangle)^2} \\
  & \leq 2^ddr\bar{v}\max_{1\leq j\leq r}\frac{1}{|1-\langle v_j,|x|\rangle|^2}.
\end{align*}
Let $K_0=\{x\in E':\forall j\in\{1,\ldots,r\},\ \langle v_j,x\rangle\leq 1/2\}$. Our goal is to prove that $\langle\nabla,f\rangle(x) \leq
C_0\|f(x)\|^2$ for all $x\not\in K_0$ for an appropriate constant $C_0$. We can assume without
loss of generality that $x\geq 0$, meaning that all its coordinates are nonnegative. We have for all $1\leq i \leq d$
\begin{align*}
  f_i(x) & =\sum_{\varepsilon^{(i)}\in\{-1,1\}^{d-1}}\sum_{j=1}^rv_{ji}\frac{2v_{ji}x_i}{(1-v_{ji}x_i-\langle\varepsilon^{(i)}
    v^{(i)}_j,x^{(i)}\rangle)(1+v_{ji}x_i-\langle\varepsilon^{(i)} v^{(i)}_j,x^{(i)}\rangle)} \\
  & \geq \sum_{\varepsilon^{(i)}\in\{-1,1\}^{d-1}}\sum_{j=1}^r\frac{v^2_{ji}x_i}{1-v_{ji}x_i-\langle\varepsilon^{(i)}
    v^{(i)}_j,x^{(i)}\rangle} \\
  & \geq \sum_{j=1}^r\frac{v^2_{ji}x_i}{1-\langle v_j,x\rangle},
\end{align*}
where $x^{(i)}$ denotes the vector of $\mathbb{R}^{d-1}$ obtained from $x$ by suppressing its $i$-th coordinate. Hence
\begin{align*}
  \|f(x)\|^2 & \geq \sum_{i=1}^d\left(\sum_{j=1}^r\frac{v^2_{ji}x_i}{1-\langle v_j,x\rangle}\right)^2 \\
  & \geq \sum_{i=1}^d\sum_{j=1}^r\frac{v^4_{ji}x^2_i}{|1-\langle v_j,x\rangle|^2}.
\end{align*}
Let $j_0$ be such that $\langle v_{j},x\rangle$ is maximal. In particular, it is larger than $1/2$, and there exists $i_0$ such that
$v_{j_0 i_0}x_{i_0}\geq 1/2d$. Then,
\begin{align*}
  \|f(x)\|^2 & \geq \frac{v^4_{j_0 i_0}x^2_{i_0}}{|1-\langle v_{j_0},x\rangle|^2}\geq \frac{\underline{v}^2}{4d^2}\max_{1\leq
    j\leq r }\frac{1}{|1-\langle v_j,|x|\rangle|^2}\geq \frac{\underline{v}^2}{4d^3 2^d r\bar{v}}\,\langle\nabla,f\rangle(x).
\end{align*}
So by definition of $\alpha$
\begin{align*}
  L\varphi(x)&\geq \alpha\varphi(x)(\|f(x)\|^2-\sqrt{d}\|b\|_\infty \|f(x)\|).
\end{align*}
But $\|f\|$ converges to $+\infty$ when $x\rightarrow \d E'$, so that the first part of~\eqref{eq:proof-conclusion} holds true. 

Let us now prove that second part also holds true. We have
\begin{align*}
LV(x)&=\beta V(x)\langle a(x)(\beta f(x)-\nabla),f(x)\rangle-\beta V(x)\langle b(x),f(x)\rangle\\
&\leq \beta V(x)\left[\beta\overline{\lambda}\|f(x)\|^2-\langle a(x)\nabla,f(x)\rangle +\langle b(x),f(x)\rangle\right].
\end{align*}
Since
\begin{align*}
-\langle a(x)\nabla,f(x)\rangle&= -\sum_{\varepsilon\in\{-1,1\}^d}\sum_{j=1}^r\sum_{i=1}^d \sum_{k=1}^d a_{ik} \frac{\varepsilon_i v_{ji}
  \varepsilon_k v_{jk}}{(1-\langle \varepsilon v_j,x\rangle)^2}\\
&\leq -\underline{\lambda}\sum_{\varepsilon\in\{-1,1\}^d}\sum_{j=1}^r \frac{\|v_j\|^2}{(1-\langle \varepsilon v_j,x\rangle)^2} \\ & =-\underline{\lambda}\langle\nabla,f\rangle(x),
\end{align*}
and, by Cauchy-Schwarz inequality,
\begin{align}
\label{eq:bound-on-f2}
\|f(x)\|^2\leq r2^d\langle\nabla,f\rangle(x),
\end{align}
we deduce from the definition of $\beta$ that
\begin{align*}
LV(x)&\leq \beta V(x)\left[-\frac{\underline{\lambda}}{2}\langle\nabla,f\rangle(x)+\langle b(x),f(x)\rangle\right] \\ &
\leq -\beta V(x)\sum_{\varepsilon\in\{-1,1\}^d}\sum_{j=1}^r \left(\frac{\underline{\lambda}\|v_j\|^2/2}{(1-\langle \varepsilon
    v_j,x\rangle)^2}-\sum_{i=1}^d \frac{\varepsilon_i v_{ji} b_i(x)}{1-\langle \varepsilon v_j,x\rangle}\right)\\
&\leq -\frac{\beta\underline{\lambda}}{4} V(x) \sum_{\varepsilon\in\{-1,1\}^d}\sum_{j=1}^r \frac{\|v_j\|^2}{(1-\langle \varepsilon
  v_j,x\rangle)^2}+B \\ & \leq -\frac{\beta\underline{\lambda}}{4 r2^d} V(x)\|f(x)\|^2+B
\end{align*}
for some positive constant $B>0$. Moreover, choosing $\delta=\frac{\beta}{\alpha-\beta}$, one has
$\frac{V(x)^{1+\delta}}{\varphi(x)^\delta}=1$ and hence
\begin{align*}
LV(x)+\frac{V(x)^{1+\delta}}{\varphi(x)^\delta}\leq  -\frac{\beta\underline{\lambda}}{4 r2^d} V(x)\|f(x)\|^2+B+1,
\end{align*}
where $-\frac{\beta\underline{\lambda}}{4 r2^d} V(x)\|f(x)\|^2+B+1$ is non-positive in a vicinity of the boundary $\d E'$ since
$V(x)\geq d(x,\d E')$ as was proved above. Since $\varphi$ is uniformly bounded from below by a positive constant on any compact
subset of $E'$ and is positive on $E'$, we deduce that $LV(x)+\frac{V(x)^{1+\varepsilon}}{\varphi(x)^\varepsilon}$ is smaller that
$c''\varphi(x)$ for some constant $c''>0$. As a consequence, the right hand side of~\eqref{eq:proof-conclusion} holds true, which
concludes the proof when $b_i(x)=0$ whenever $x_i=0$.

It only remains to extend the last result to cases where $b_i(x)$ does not vanish when $x_i=0$. Since by symmetry the functions $V$
and $\varphi$ satisfy Neumann's boundary condition on the reflecting boundary of $E$, they both belong to the domain of the generator
of the process $X$, hence we can reproduce the computations above which are actually valid for any bounded measurable $b$ and
conclude using the same criterion. This concludes the proof of Theorem~\ref{thm:QSD-diff}.
\end{proof}

We can deduce from~\eqref{eq:expo-conv-diff} similar estimates as in Proposition~\ref{prop:2}, with $\gamma$ playing the role of
$\lambda_1-\lambda_0$ and with the additional difficulty that we cannot obtain pointwise estimates on $\eta$ and $\alpha$, but only
estimates on their mean values on small balls. Again, since $\eta$ is bounded, these estimates are good when $\lambda_0\ll\gamma$.

\begin{prop}
  \label{prop:diffusion}
   For all $t\geq 0$ and $x\in E$,
   \begin{equation}
     \label{eq:estimate-eta-2-bis}
     \eta(x)\leq e^{\lambda_0 t}+Ce^{-\gamma t}
   \end{equation}
   and, for all measurable $A\subset E$ such that $\alpha(A)>0$,
  \begin{equation}
    \label{eq:estimate-eta-2}
    \frac{1}{\alpha(A)}\int_A \eta(x) d\alpha(x)\geq e^{\lambda_0 t}+Ce^{-\gamma t}-\frac{e^{\lambda_0 t}-1+Ce^{-\gamma t}}{\alpha(A)}.
  \end{equation}
  In addition, for all $B\subset E$ measurable such that $\alpha(B)>0$, we define the probability measure $\alpha_{| B}$ as
  $\frac{\mathbbm{1}_{x\in B}d\alpha (x)}{\alpha(B)}$. Then, for all such $B$, all $t\geq 0$ and all $A\subset
  E$ measurable, we have
  \begin{multline}
    \left|\mathbb{P}_{\alpha_{|B}}(X_t\in A)-\alpha(A)\right|\leq \frac{\alpha(A)}{\alpha(B)}\left(1-e^{-\lambda_0
      t}+Ce^{-(\lambda_0+\gamma) t}\right) \\ +Ce^{-(\lambda_0+\gamma) t}\left(\|\eta\|_\infty+2\alpha(A)+ Ce^{-\gamma t}\right).
    \label{eq:estimate-QSD-2}
  \end{multline}
\end{prop}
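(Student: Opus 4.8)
The plan is to transpose the proof of Proposition~\ref{prop:2} to the diffusion setting, using in place of the Perron--Frobenius estimate~\eqref{eq:eta} its diffusion counterpart. Since~\eqref{eq:expo-conv-diff} is equivalent to Condition~(A) of~\cite{CV}, it also provides (see~\cite[Proposition~2.3]{CV}) the uniform exponential convergence $|e^{\lambda_0 t}\mathbb{P}_x(t<\tau_\d)-\eta(x)|\le Ce^{-\gamma t}$ for all $x\in E$, $t\ge 0$ (possibly after reducing $\gamma$ and enlarging $C$), which is exactly the analogue of~\eqref{eq:eta}. Integrating this bound against an arbitrary probability measure $\mu$ on $E$ yields $|e^{\lambda_0 t}\mathbb{P}_\mu(t<\tau_\d)-\mu(\eta)|\le Ce^{-\gamma t}$, where $\mu(\eta):=\int_E\eta\,d\mu$; and letting $t\to\infty$ with $\mu=\alpha$, using the characterization $\mathbb{P}_\alpha(t<\tau_\d)=e^{-\lambda_0 t}$, gives the normalization $\int_E\eta\,d\alpha=1$, the continuous analogue of $\sum_x u_xv_x=1$.

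With these two facts, \eqref{eq:estimate-eta-2-bis} is immediate, since $\eta(x)\le e^{\lambda_0 t}\mathbb{P}_x(t<\tau_\d)+Ce^{-\gamma t}\le e^{\lambda_0 t}+Ce^{-\gamma t}$. For~\eqref{eq:estimate-eta-2} I would reproduce the lower-bound computation of Proposition~\ref{prop:2}: writing $\int_A\eta\,d\alpha=1-\int_{E\setminus A}\eta\,d\alpha$ and bounding the integrand on $E\setminus A$ by $e^{\lambda_0 t}+Ce^{-\gamma t}$ gives $\int_A\eta\,d\alpha\ge 1-(e^{\lambda_0 t}+Ce^{-\gamma t})(1-\alpha(A))$, and dividing by $\alpha(A)>0$ and rearranging produces precisely the stated right-hand side.

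For~\eqref{eq:estimate-QSD-2}, set $\mu:=\alpha_{|B}$ (a probability measure on $E$ since $\alpha(B)>0$), so that $\mu(\eta)=\alpha(B)^{-1}\int_B\eta\,d\alpha$, and decompose through the intermediate quantities $\alpha(A)\,\mathbb{P}_\mu(t<\tau_\d)$ and $\alpha(A)\,\mu(\eta)e^{-\lambda_0 t}$:
\begin{multline*}
  |\mathbb{P}_\mu(X_t\in A)-\alpha(A)|\le |\mathbb{P}_\mu(X_t\in A\mid t<\tau_\d)-\alpha(A)|\,\mathbb{P}_\mu(t<\tau_\d)\\
  +\alpha(A)\,|\mathbb{P}_\mu(t<\tau_\d)-\mu(\eta)e^{-\lambda_0 t}|+\alpha(A)\,|\mu(\eta)e^{-\lambda_0 t}-1|.
\end{multline*}
The first term is bounded using~\eqref{eq:expo-conv-diff} (which controls the total variation distance, hence $|\mathbb{P}_\mu(X_t\in A\mid t<\tau_\d)-\alpha(A)|\le Ce^{-\gamma t}$) together with $\mathbb{P}_\mu(t<\tau_\d)\le\|\eta\|_\infty e^{-\lambda_0 t}+Ce^{-(\lambda_0+\gamma)t}$, which comes from the integrated analogue of~\eqref{eq:eta}; this gives a contribution $\le Ce^{-(\lambda_0+\gamma)t}(\|\eta\|_\infty+Ce^{-\gamma t})$. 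The second term is $\le\alpha(A)\,Ce^{-(\lambda_0+\gamma)t}$ by the same integrated estimate. The third term is handled by applying~\eqref{eq:estimate-eta-2-bis} and~\eqref{eq:estimate-eta-2} with $B$ in place of $A$, which together yield $|\mu(\eta)e^{-\lambda_0 t}-1|\le\alpha(B)^{-1}\bigl(1-e^{-\lambda_0 t}+Ce^{-(\lambda_0+\gamma)t}\bigr)$ (using $\alpha(B)\le 1$ to absorb the one-sided term), whence a contribution $\le \frac{\alpha(A)}{\alpha(B)}\bigl(1-e^{-\lambda_0 t}+Ce^{-(\lambda_0+\gamma)t}\bigr)$. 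Collecting the three pieces and adjusting $C$ gives~\eqref{eq:estimate-QSD-2}.

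The argument is essentially routine once~\eqref{eq:expo-conv-diff} is in hand; the two points deserving care are (i) upgrading the bare convergence of $e^{\lambda_0 t}\mathbb{P}_x(t<\tau_\d)$ recorded after Theorem~\ref{thm:QSD-diff} to a genuine exponential rate uniform in $x$, so that pointwise statements such as~\eqref{eq:estimate-eta-2-bis} are meaningful, and (ii) the fact that, unlike in the finite case, one cannot condition on $\{X_0=x\}$ nor speak of $\mathbb{P}_x(X_t=y)$, so that the whole scheme must be run with the initial law $\alpha_{|B}$ and with $\alpha$-averages of $\eta$ over sets; the boundedness of $\eta$ (which is also read off from~\eqref{eq:estimate-eta-2-bis} at $t=0$) is what keeps $\|\eta\|_\infty$ finite in the final bound.
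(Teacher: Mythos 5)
Your proposal is correct and follows essentially the same route as the paper: it derives the pointwise estimate $|e^{\lambda_0 t}\mathbb{P}_x(t<\tau_\d)-\eta(x)|\leq Ce^{-\gamma t}$ and the normalization $\int_E\eta\,d\alpha=1$ from~\eqref{eq:expo-conv-diff} via~\cite[Proposition~2.3]{CV}, obtains~\eqref{eq:estimate-eta-2-bis} and~\eqref{eq:estimate-eta-2} by the same complementation argument as in Proposition~\ref{prop:2}, and proves~\eqref{eq:estimate-QSD-2} by the same three-term triangle-inequality decomposition through $\alpha(A)\,\mathbb{P}_{\alpha_{|B}}(t<\tau_\d)$ and $\alpha(A)\,\alpha(B)^{-1}\int_B\eta\,d\alpha\,e^{-\lambda_0 t}$, with each term bounded exactly as in the paper.
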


\begin{proof}
  As explained above, the property~\eqref{eq:expo-conv-diff} implies (see~\cite[Proposition~2.3]{CV}) that
  \begin{equation}
    \label{eq:eta-diff}
    |e^{\lambda_0 t}\mathbb{P}_x(t<\tau_\d)-\eta(x)|\leq C' e^{-\gamma' t},\quad \forall x\in E,\ \forall t\geq 0,
  \end{equation}
  for constants $C'$ and $\gamma'$ that can be assumed without loss of generality equal to th constants $C$ and $\gamma$
  of~\eqref{eq:expo-conv-diff}. In addition, $\int_E \eta(x)\alpha(dx)=1$. 

  We first deduce from~\eqref{eq:eta-diff} the inequality~\eqref{eq:estimate-eta-2-bis}. Combining this with $\alpha(E)=\int_E \eta
  d\alpha=1$, we deduce that
  \begin{align*}
    \int_A \eta(x) d\alpha(x) & =1-\int_{E\setminus A} \eta(x) d\alpha(x) \\
    & \geq 1-(e^{\lambda_0 t}+Ce^{-\gamma t})\alpha(E\setminus A) \\
    & \geq 1-e^{\lambda_0t}-Ce^{-\gamma t}+\alpha(A) (e^{\lambda_0 t}+Ce^{-\gamma t}).
  \end{align*}
  This entails~\eqref{eq:estimate-eta-2}.

  To obtain~\eqref{eq:estimate-QSD-2}, we combine~\eqref{eq:expo-conv-diff},~\eqref{eq:eta-diff} and~\eqref{eq:estimate-eta-2} in
  \begin{align*}
     \left|\mathbb{P}_{\alpha_{|B}}(X_t\in A)-\alpha(A)\right| 
    & \leq \left|\mathbb{P}_{\alpha_{|B}}(X_t\in A\mid t<\tau_\partial)-\alpha(A)\right|\mathbb{P}_{\alpha_{|B}}(t<\tau_\partial) \\
    & +\alpha(A)\left|\mathbb{P}_{\alpha_{|B}}(t<\tau_\partial)-\frac{1}{\alpha(B)}\int_B \eta(x)d\alpha(x)
      e^{-\lambda_0 t}\right| \\ & +\alpha(A)\left|\frac{1}{\alpha(B)}\int_B
      \eta(x)da(x) e^{-\lambda_0 t}-1\right| \\
    & \leq C e^{-\gamma t}\left(\frac{1}{\alpha(B)}\int_B\eta(x)d\alpha(x) e^{-\lambda_0 t}+C
      e^{-(\lambda_0+\gamma)t}\right) \\ & +2C\alpha(A) e^{-(\lambda_0+\gamma)t}+\frac{\alpha(A)}{\alpha(B)}\left(1-e^{-\lambda_0
        t}+Ce^{-(\lambda_0+\gamma)t}\right). \qedhere
  \end{align*}
\end{proof}


\section{Simulations}
\label{sec:simulations}
In this section, we focus on the examples presented in Sections~\ref{sec:colliding-stacks} and~\ref{banker}.\\

As illustration of the different models studied above, we present simulation results of the density of the QSD as well as the value of
$\lambda_{0}$ for a diffusion model of two colliding stacks and a discrete state-space model of a 2-dimensional banker algorithm. The
numerical method applies to general models. We restrict to the case of two stacks and two consumers for pedagogical reasons.

\label{sec:simul}


We use a specific class of particle systems with singular interaction, called Fleming-Viot particle systems, which arises in the
study of distributions of absorbed Markov processes conditioned to non-absorption~\cite{Burdzy,DelMoral}. In these systems,
particles move independently following the Markovian dynamics of the underlying process (here, colliding stacks or banker
algorithms), until one gets absorbed, in which case it is immediately sent to the position of another particle, chosen uniformly at
random. This method overcomes the problematic and necessary deterioration of classical Monte-Carlo techniques in the setting of
absorbed processes, by maintaining a constant sample size of significant particles. 

It is known in general that this method allows to approximate conditional distributions of the underlying Markov process in the limit of
infinitely many particles~\cite{PhD-DV,OV}. In our case, since we want to approximate the quasi-stationary distribution, we need to
simulate the Fleming-Viot process for sufficiently long time. In practice, we compute the ergodic mean of the simulated system and
stop the simulation when its variation goes below a threshold.

In this setting, the eigenvalue $\lambda_0$ is obtained as the average rate of absorption of particles. This approximation relies on
the unbiased estimator introduced in the proof of Theorem~2.1 of~\cite{Villemonais2014}.

\subsection{Simulations for two colliding stacks}

We simulate a diffusive model of two colliding stacks, given by the solution $X_t=(X^{(1)}_t,X^{(2)}_t)$ of the stochastic
differential equation
\begin{align*}
  dX^{(1)}_t & = dB^{(1)}_t+\sigma d B^{(2)}_t \\  
  dX^{(2)}_t & =\sigma dB^{(1)}_t+d B^{(2)}_t,
\end{align*}
with a parameter $\sigma\geq 0$ and two independent Brownian motions $B^{(1)}$ and $B^{(2)}$. The process is assumed to be killed
when $X^{(1)}_t+X^{(2)}_t=1$ and reflected when $X^{(1)}_t=0$ or $X^{(2)}_t=0$. The case $\sigma=0$ corresponds to two independent
Brownian motions for each coordinate. The parameter $\sigma$ governs the correlation between the two coordinates.

\begin{figure}[h]
  \centering
  \mbox{\subfigure[\small Simulation with $\sigma=0$. We obtain $\lambda_0 \approx 4.65$.]%
{\includegraphics[width=0.45\textwidth]{./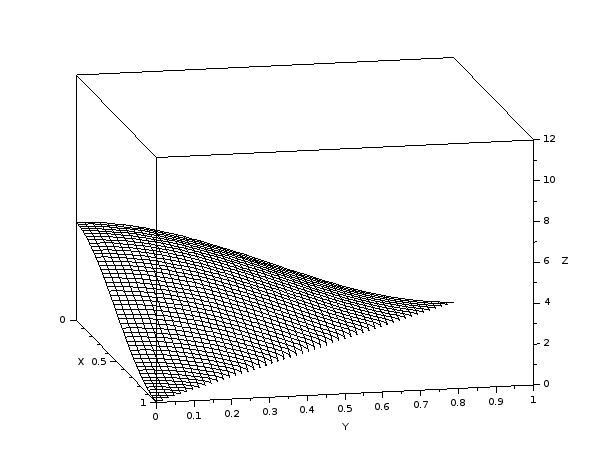}}\quad
    \subfigure[\small Simulation with $\sigma=0.5$. We obtain $\lambda_0 \approx 5.69$.]%
{\includegraphics[width=0.45\textwidth]{./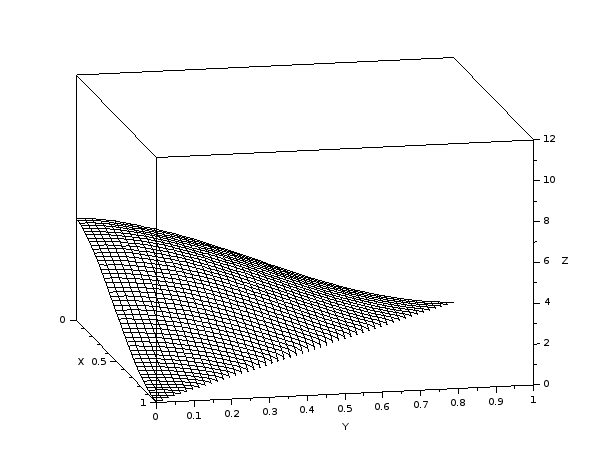}}} \\
  \mbox{\subfigure[\small Simulation with $\sigma=0.7$. We obtain $\lambda_0 \approx 6.73$.]%
{\includegraphics[width=0.45\textwidth]{./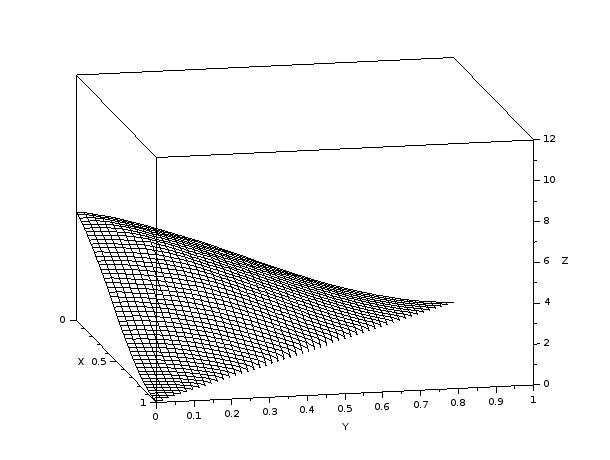}}\quad
    \subfigure[\small Simulation with $\sigma=0.9$. We obtain $\lambda_0 \approx 8.06$.]%
{\includegraphics[width=0.45\textwidth]{./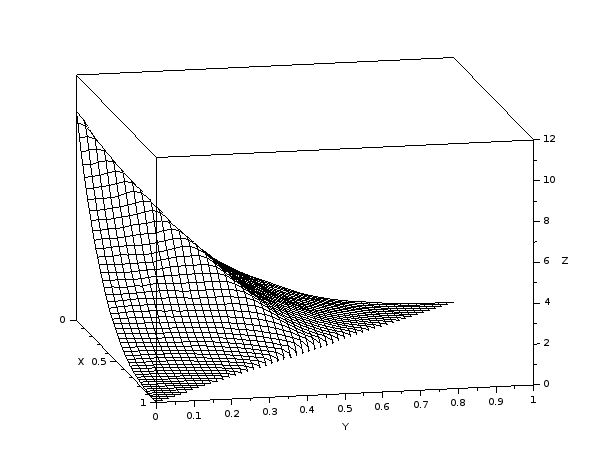}}}
\caption{{\small Density of the QSD for the SDE $dX_t=S dB_t$ in $\mathbb{R}^2$, with
      $S=(\sigma,1;1,\sigma)$, killed when $X^{(1)}_t+X^{(2)}_t=1$ and reflected
      when $X^{(1)}_t=0$ or $X^{(2)}_t=0$.}}
  \label{fig:colliding-stacks}
\end{figure}

The numerical results are presented in Fig.~\ref{fig:colliding-stacks}, where the density of the QSD is plotted for different values
of $\sigma$. We observe that small values of $\sigma$ have little influence on the density of the QSD
(Fig.~\ref{fig:colliding-stacks}(a) and~(b)). Larger values of $\sigma$ have a tendency to concentrate the density close to the line
$x=y$. This is due to the stronger correlation between the two coordinates (with the limiting case $\sigma=1$ where
$X^{(1)}=X^{(2)}$). In all simulations, the density of the QSD vanishes at the absorbing boundaries of the domain, as expected, and
decreases with respect to $X^{(1)}$ and $X^{(2)}$. Hence the larger density is obtained at the point $(0,0)$, farthest from the
absorbing boundary.

The computed eigenvalues $\lambda_0$ increase with $\sigma$. This means that the deadlock of the system is faster for larger values
of $\sigma$. This is due to the stronger correlation between the two coordinates, which makes the process move preferentially and
faster in the direction orthogonal to the absorbing boundary. In the extreme case $\sigma=1$, the process only moves in this
direction and does not explore the major part of the domain. Although the densities of the QSD for $\sigma=0$ and $\sigma=0.5$ are
quite similar, the values of $\lambda_0$ are significantly different.

\subsection{Simulations for the banker algorithm in the case of two consumers}

We consider here a Markov chain on $\mathbb{N}^2$, which is a random walk $X_n=(X^{(1)}_n,X^{(2)}_n)$ in discrete time, with the
following probabilities of transitions. The process jumps from $(i,j)$ to
\begin{align*}
  (i,j) & \text{ with probability } \frac{1}{3}\\
  (i+1,j+1) & \text{ with probability } \frac{1}{6}\\
  (i+1,j-1) & \text{ with probability } \frac{1}{6}\\
  (i-1,j+1) & \text{ with probability } \frac{1}{6}\\
  (i-1,j-1) & \text{ with probability } \frac{1}{6},
\end{align*}
with additional killing of the process when it reaches $X^{(1)}_n+X^{(2)}_n=100$ and with various schemes of reflection. In all
cases, the first (resp. second) coordinate of the process is reflected when $X^{(1)}_n=0$ (resp. $X^{(2)}_n=0$). In addition to this,
reflection occurs when $X^{(1)}_n$ or $X^{(2)}_n$ reach positive thresholds $m_1$ and $m_2$ which vary in the following
simulations. Note that if $m_1\geq 100$ and $m_2\geq 100$, we are back to a model of two colliding stacks. We added a possibility for
the process to remain at the same position to avoid periodicity problems.

The numerical results are presented in Fig.~\ref{fig:banker-algo}, where the QSD is plotted for different values of $m_1$ and $m_2$.
This corresponds to various shapes of the domain. We observe again that the QSD vanishes at the absorbing boundary and increases with the
distance to this boundary. In the three simulations, we observe positive values of the QSD at reflection boundaries.

The computed eigenvalues $\lambda_0$ depend both on the shape of the domain of the process (the larger the domain is, the smaller
$\lambda_0$ should be) and the size of the aborbing part of the boundary (the larger it is, the larger $\lambda_0$ should be). Hence,
the dependence of $\lambda_0$ with respect to $m_1$ and $m_2$ is non-trivial. For example, when comparing
Fig.~\ref{fig:banker-algo}~(c) to~(a), we see that the domain is larger but also the absorbing boundary. In this case, this produces
a larger value of $\lambda_0$, hence a higher speed of deadlock, although the amount of available resources is larger.

\pagebreak

\begin{figure}[h]
  \centering
  \mbox{\subfigure[\small Reflection when
      $X_1\in\{0,80\}$ or $X_2\in\{0,80\}$. We obtain
    $\lambda_0 \approx 2.88\times 10^{-4}$.]%
{\includegraphics[width=0.45\textwidth]{./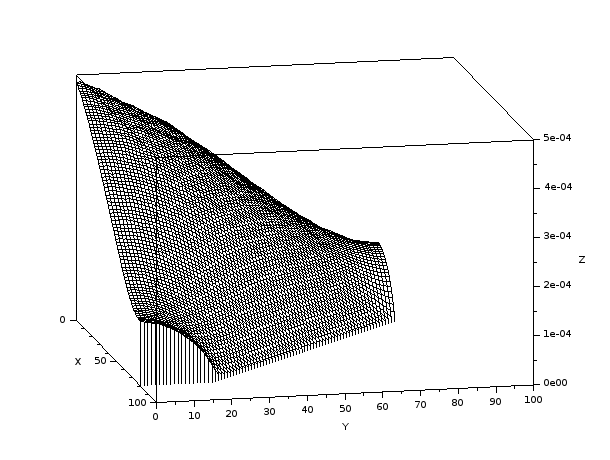}}\quad
    \subfigure[\small Reflection when
      $X_1\in\{0,70\}$ or $X_2\in\{0,90\}$. We obtain
    $\lambda_0 \approx 2.67\times 10^{-4}$.]%
{\includegraphics[width=0.45\textwidth]{./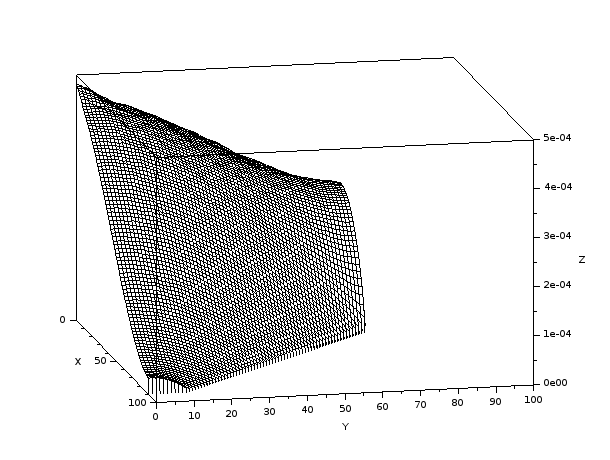}}} \\
  \mbox{\subfigure[\small Reflection when
      $X_1\in\{0,80\}$ or $X_2\in\{0,100\}$. We obtain
    $\lambda_0 \approx 2.99\times 10^{-4}$.]%
{\includegraphics[width=0.45\textwidth]{./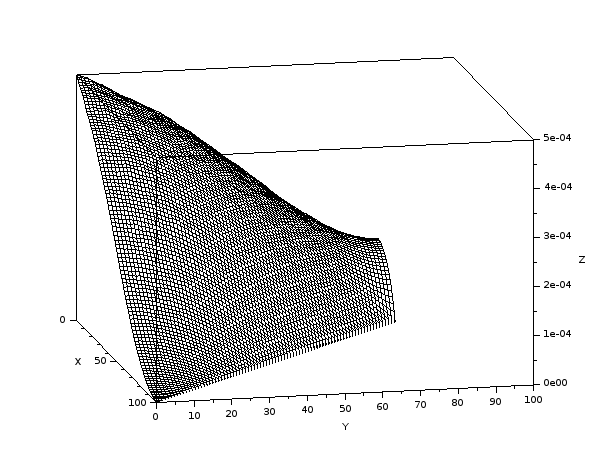}}
}
\caption{{\small Density of the QSD for a symmetric random walk on
      $\mathbb{N}^2$, killed when $X^{(1)}_n+X^{(2)}_n=100$ with different schemes of reflection.}}
  \label{fig:banker-algo}
\end{figure}

\section{Concluding remarks and further research aspects}
\label{sec:ccl}
As far as we know, this paper is the first attempt to use the theory of quasi-invariant distributions for analysing
distributed algorithms which involve large numbers of processors and resources (i.e. types of memories). Our results are 
non-asymptotic in time and in the number of
resources and processors: they are true for any finite values
of the involved parameters and reflect the real behavior of these algorithms.\\
Analysing other types of dynamic algorithms with similar QSD-tools will be the object of further research.

\end{document}